\def\@citecolor{blue}
\def\@urlcolor{blue}
\def\@linkcolor{blue}
\def\theequation{\thesection.\@arabic \c@equation}
\def\@citecolor{blue}
\def\@urlcolor{blue}
\def\@linkcolor{blue}
\def\theenumi{\@roman\c@enumi}
\theoremstyle{plain}
\newtheorem{theorem}[equation]{Theorem}
\newtheorem{lemma}[equation]{Lemma}
\newtheorem{corollary}[equation]{Corollary}
\newtheorem{proposition}[equation]{Proposition}
\newtheorem{notation}[equation]{Notation}
\theoremstyle{definition}
\newtheorem{remark}[equation]{Remark}
\newtheorem{remarks}[equation]{Remarks}
\newtheorem{examples}[equation]{Examples}
\newtheorem{definition}[equation]{Definition}
\def\NZQ{\mathbb}               
\def\NN{{\NZQ N}}
\def\ZZ{{\NZQ Z}}
\def\RR{{\NZQ R}}
\def\opn#1#2{\def#1{\operatorname{#2}}} 
\opn\supp{supp}
\opn\chara{char}
\opn\length{\ell}
\opn\projdim{proj\,dim}
\opn\depth{depth}
\opn\reg{reg}
\opn\lreg{lreg}
\opn\sat{^{sat}}
\opn\lex{^{lex}}
\opn\gin{gin}
\opn\GL{GL}
\opn\Gf{Gf}
\opn\rk{rank}
\opn\cs{{\bf cs}}
\opn\gcs{{\bf gcs}}
\opn\SSK{SSk}
\opn\Ker{Ker}
\opn\Coker{Coker}
\opn\Im{Im}
\opn\Hom{Hom}
\opn\Tor{Tor}
\opn\Ext{Ext}
\opn\End{End}
\opn\Aut{Aut}
\opn\id{id}
\opn\GL{GL}
\opn\Gin{Gin}
\opn\Hilb{Hilb}
\opn\ini{in}
\opn\End{end}
\begin{document}

\title{Generic circuits sets and general initial ideals with respect to weights}
\author{Giulio Caviglia} 
\thanks{The work of the first author was supported by a grant from the Simons Foundation (209661 to G. C.)}
\address{Giu\-lio Ca\-vi\-glia - Department of Mathematics -  Purdue University - 150 N. University Street, West Lafayette - 
  IN 47907-2067 - USA}
\email{gcavigli@math.purdue.edu}
\author{Enrico Sbarra}
\address{Enrico Sbarra - Dipartimento di Matematica - Universit\`a degli Studi di Pisa - Largo Bruno Pontecorvo 5 - 56127 Pisa - Italy}
\email{sbarra@dm.unipi.it}

\begin{abstract}
 We study the set of circuits of a homogeneous ideal and that of its truncations, and  introduce the notion of  generic circuits set. We show how this is a well-defined invariant that can be used, in the case of initial ideals with respect to weights, as a counterpart of the (usual) generic initial ideal with respect to monomial orders. As an application we recover the existence of the generic fan introduced by R\"omer and Schmitz for studying generic tropical varieties. We also consider general initial ideals with respect to weights and  show, in analogy to the fact that generic initial ideals are Borel-fixed, that these are  fixed under the action of certain Borel subgroups of the general linear group.
\end{abstract}

\keywords{Generic initial ideal, weights, circuits, generic tropical variety}
\date{\today}

\maketitle

\section*{Introduction}
In the study of homogeneous ideals in a polynomial ring it is a standard technique  to pass to initial ideals. Also, in order to work with a monomial ideal more closely related to a given homogeneous ideal $I$, i.e., with a monomial ideal which  shares with $I$  important numerical invariants other than  the Hilbert function, one can choose to work in generic coordinates or, in other words, to consider a generic initial ideal of $I$ with respect to some monomial order. Even though some of the ideas underlying  the notion of generic initial ideal were already present in the works of Hartshorne \cite{Ha} and Grauert \cite{Gt}, a proper definition  as well as the study of some of its main properties is to be found only later in the work of Galligo \cite{Ga}, where  characteristic zero is assumed, and the subsequent  paper of Bayer and Stillman \cite{BaSt}, where the assumption on the characteristic is dropped. It is also shown there that generic initial ideals are invariant under the action of the Borel subgroup of the general linear group of coordinates changes $\GL_n(K)$; thus, they are endowed with interesting combinatorial properties which depend on the characteristic, but are well understood (see for instance \cite{Pa}, also for the study of other group actions). More can be said if one considers generic initial ideals with respect  to some special monomial orders such as the lexicographic and the reverse-lexicographic orders. The generic initial ideal of $I$ with respect to the revlex order has the same depth as $I$; therefore, it has the same projective dimension as $I$ and its quotient ring is Cohen-Macaulay exactly when that of $I$ is Cohen-Macaulay; furthermore, it shares with $I$ the same Castelnuovo-Mumford regularity and, in general, the same positions and values of extremal Betti numbers, cf. \cite{BaSt} and \cite{BaChPo}. On the other hand, when one considers the lex order, the  generic initial ideal of $I$ captures other geometric invariants of the projective variety defined by $I$, see for instance \cite{Gr} Section 6, \cite{CoSi} and \cite{AhKwSo}.
The interested reader is referred to the standard references \cite{Gr} and \cite{Ei} and will also find the dedicated parts of Herzog and Hibi's book \cite{HeHi} useful to understand the connection with extremal Betti numbers and with shifting operations.

The main question we address in this paper is the following: {\em How can one define the generic initial ideal with respect to a weight?} The initial ideal with respect to a weight $\omega$ of $gI$ is not necessarily constant on a non-empty Zariski open subset of $\GL_n(K)$ as, for instance, if $\omega=(1,\ldots,1)$ then $\ini_\omega(gI)=gI$ for all coordinates changes $g$. We provide an answer to the above question by introducing some new invariants of $I$.\\

This paper is organized as follows. The first  section is dedicated to introducing some notation and recalling some well-known properties of monomial orders, initial ideals with respect to weights, reduced and universal Gr\"obner bases.
In the second section, Definitions \ref{cs} and \ref{gcs}, we introduce the notion of circuits set and generic circuits set, we explain their basic properties, cf. Lemma \ref{useful} and Theorem \ref{generalcs},  and we relate the circuits set and the generic circuits set of a homogeneous ideal $I$ to its reduced Gr\"obner bases and  Gr\"obner fan, cf. Lemma \ref{pretim}, Corollaries \ref{cervellobollito} and \ref{preTim}. As an application, we recover in Corollary \ref{tim} one of the main results of  \cite{R\"oSc}, Corollary 3.2, where the {\em generic Gr\"obner fan} is introduced. 
In the third and last section we explain why what would be the natural definition of generic initial ideal with respect to weights  does not return an invariant of a homogeneous ideal, and we suggest what provides, in our opinion,  valid alternatives to  generic initial ideals when working with weights: generic and general circuits set, their truncations,  their initial circuits set and general initial ideals. Finally, in analogy with  what  is known in the case of monomial orders,  we show that general initial ideals  with respect to weights are stable under the action of certain subgroups of the general linear group.\\

We would like to dedicate this paper to J\"urgen Herzog, teacher and collaborator, to his ability for sharing his passion for Commutative Algebra with so many students all over the world.

\section{Notation and preliminaries}
 Let $A=K[X_1,\ldots,X_n]$ be a polynomial ring over a field $K$.  Given a non-zero polynomial $f\in A$, we write it uniquely as sum of monomials with non-zero coefficients  and we call the set of all such monomials, denoted by $\supp(f)$, the {\em (monomial)  support} of $f$. When $S\subseteq A$ is a set, $\supp(S)$ will be the set of all monomial supports of the  polynomials in $S$. When $I$ is a homogeneous ideal of $A$ and $d$ an integer, by $I_d$ and $I_{\leq d}$ we denote the degree $d$ part of $I$ and  the {\em truncation} $\oplus_{j\leq d}I_j$ of $I$ at (and below) $d$ respectively. 

\subsection{Monomial orders} We recall that a  {\em monomial order} on $A$ is a total order $\prec$ on the monomials of $A$ which is also compatible with multiplication, i.e., for all ${\bf X^a}, {\bf X^b}, {\bf X^c}$ monomials of $A$ with  ${\bf X^c} \not = 1$ and ${\bf X^a}\prec {\bf X^b}$ one has   ${\bf X^a} \prec {\bf X^a}{\bf X^c}\prec {\bf X^b}{\bf X^c}.$ 
Given a monomial order $\prec$, we denote by $\ini_\prec(f)$  the greatest with respect to $\prec$ monomial  in $\supp(f)$; we call it the {\em initial monomial} (or {\em leading monomial}) of $f.$  Accordingly, given a homogeneous ideal $I$, we call {\em initial ideal} of $I$ with respect to $\prec$ and denote it by $\ini_\prec(I)$, the ideal generated by all the initial monomials of elements of $I$.
A finite set $G=\{f_1,\ldots,f_r\}$ of elements of $I$ such that $\{\ini_\prec(f_1),\ldots,\ini_\prec(f_r)\}$ is a set of  generators for $\ini_\prec(I)$ is called a {\em Gr\"obner basis} of $I$ (with respect to $\prec$).
 Furthermore, if $f_1,\ldots,f_r$ are monic (with respect to $\prec$) and $\ini_\prec(f_i)$ does not divide any monomial in  $\supp (f_j)$ for $i\neq j$ then we call $G$ the {\em reduced Gr\"obner basis} of $I$ (with respect to $\prec$). It is not difficult to see that such a basis always exists and it is uniquely determined by $\prec$ and $I$; moreover, if $\prec$ and $\prec'$ are two monomial orders such that  $\ini_{\prec}(I)=\ini_{\prec'}(I)$ then the reduced Gr\"obner bases of $I$ with respect to $\prec$ and to $\prec'$ are the same. 
\noindent 
It is well-known that a given homogeneous ideal has only a finite number of initial ideals; therefore, it has finitely many reduced Gr\"obner bases. A subset $G$ of $A$ is called a {\em universal Gr\"obner basis} of $I$ if $G$ is a Gr\"obner basis of $I$ with respect to all monomial orders simultaneously. Such a basis can be obtained, for instance, as the union of all the reduced Gr\"obner bases of $I$ (cf. \cite{St}, Corollary 1.3) in which case we  call it  the {\em canonical universal  Gr\"obner basis} of $I$.
 
 \subsection{Initial ideals with respect to weights.}  We now consider the general case of initial ideals defined by using weights and  summarize some of the  basic properties and constructions; our main references are, as before, the books of Eisenbud \cite{Ei} and Sturmfels \cite{St}. We shall call a vector of $\RR^n$ a {\em weight vector} or, simply, a {\em weight}. Given a polynomial $f=\sum_i\alpha_i{\bf X^{a_i}}$, one lets the {\em initial form} of $f$ with respect to $\omega$ be the sum of all terms  $\alpha_{j}{\bf X^{a_{j}}}$ of $f$ which have maximal weight, i.e., such that the scalar product $\omega\cdot{\bf {a_{j}}}$  is maximal. Accordingly, one defines the {\em initial ideal with respect to $\omega$} of a given ideal $I$ as the ideal $\ini_\omega(I)$ generated by all the initial forms of polynomials in $I.$ This ideal will not be monomial in general. Similarly, one defines  $\ini_\omega(W)$ for a $K$-vector subspace $W$ of $A$.

\noindent
Let now $\prec$ be a monomial order; it is natural to define a new monomial order $\prec_\omega$ by refining $\omega$ by means of $\prec$, so that for all $f\in A$ 
one has $\ini_{\prec_\omega}(f)=\ini_\prec(\ini_\omega(f))$ and, similarly, for all ideals $I\subseteq A$ one has $\ini_{\prec_\omega}(I)=\ini_\prec(\ini_\omega(I))$, which also yields that $I$ and $\ini_\omega(I)$ share the same Hilbert function. Furthermore, if $G=\{f_1,\ldots,f_r\}$ is a (reduced, universal) Gr\"obner basis of $I$ with respect to $\prec_\omega$, then $\{\ini_\omega(f_i) \: i=1,\ldots,r \}$ is a (reduced, universal) Gr\"obner basis of $\ini_\omega(I)$ with respect to $\prec$. The use of weights generalizes monomial orders also in  the following sense: for any monomial order $\prec$ and any homogeneous ideal $I$, there exists a non-negative integral weight $\omega$ such that $\ini_{\prec}(I)=\ini_\omega(I)$, by \cite{St}, Proposition 1.11 (see also \cite{Ro}). We also observe that, if $I\subseteq A$ is a homogeneous ideal, $\prec$ is a monomial order and   $\omega, \omega'\in\RR^n$ are such that $\ini_\omega(\cdot)$ and $\ini_{\omega'}(\cdot)$ coincide on all elements of  a reduced Gr\"obner basis of $I$ with respect to  $\prec_\omega$, then the initial ideal of $I$ with respect to $\omega$ and $\omega'$ coincide, since both have  the same Hilbert function as $I$.

\subsection{A flat family argument}\label{flatt}
We would like to conclude this section with a technical observation we shall need later  when we use a classical flat family argument, as of \cite{Ei} Theorem 15.17. Let $\omega \in\ZZ^n$, $I$ a given ideal of $A$ and $A[t]$ a polynomial ring over $A$. Let ${\bf e_i}$ denote the $i^{\rm th}$ element of the standard basis of $\ZZ^n$; for all $f=\sum_i\alpha_i{\bf X^{a_i}}\in I$ we denote the homogenization $t^{\max_i\{\omega\cdot {\bf a_i}\}}f(t^{-\omega\cdot{\bf e_1}}X_1,\ldots,t^{-\omega\cdot{\bf e_n}}X_n)$ of $f$ with respect to $\omega$ by $\tilde{f}$; also, $\tilde{I}$ will denote the ideal of $A[t]$ generated by all $\tilde{f}$ with $f\in I$. The ideal $\tilde{I}$ is the homogenization of $I$ with respect to $\omega$.
 One can thus build a  family $\{I_{a} \: a\in K\}$ of ideals of $A$, where $I_a=\tilde{I}_{t=a}$ is the ideal $\tilde{I}$ evaluated at $t=a$. It is important to notice that $I_1=I$, $I_0= \ini_{\omega}(I)$,  and that for all $a\not =0$ the ideal $I_{a}$ is the image of $I$ under the diagonal change of coordinates $D_a$ which maps $X_i$ to $a^{- \omega\cdot{\bf e_i}}X_i.$ This family is flat because the Hilbert function is constant on its elements, see also Definition 1.17 and Theorem 1.18 in \cite{Gr}.

\section{Generic circuits sets}
In this section we define the notion of circuits set and generic circuits set of homogeneous ideals and  show how to use these definitions to compute reduced and  universal Gr\"obner bases, and Gr\"obner fans.

\begin{definition}\label{cs}
Let $I$ be a subset of $A$. We define the {\em circuits set of $I$}, denoted  by $\cs(I)$,  to be the set of all minimal (with respect to inclusion) elements of $\supp(I)$. We say that a set $T$ is a {\em circuits set} if $T=\cs(I)$ for some subset $I$ of  $A$. In particular, $T$ is a collection of finite sets of monomials of $A$. 

\end{definition}

\noindent
The name we chose in the above definition  comes from Matroid Theory, see \cite{Ox} or any other standard reference: a circuit in a matroid  is a minimal dependent subset, i.e  a dependent set whose proper subsets are all independent. When $I$ is a $K$-vector subspace of $A$, as it is for instance when $I$ is a homogeneous ideal, one can define a matroid by considering the set $S$ of all monomials of $A$ and declaring a subset of $S$ independent (resp. dependent) if its image in $A/I$ consists of linearly independent (resp. dependent) elements. The support of a polynomial $f\in I$ is minimal among all the supports of elements of $I$ if and only if it is a circuit in the  above  matroid.
It is immediately seen that if $I$ is finite then so is $\cs(I)$. Moreover, if $I$ is a homogeneous ideal of $A$ then $\cs(I)= \sqcup_{d} \cs(I_d)$ and $\cs(I_{\leq d})= \sqcup_{h\leq d} \cs(I_h)$. Also, we notice that if $I$ is a monomial ideal then $\cs(I)$ is just the set of all monomials in $I.$ Clearly, when $I$ and $J$ are monomial ideals then  $\cs(I)=\cs(J)$ if and only if $I=J$, a fact which is false in general, e.g., in $A=K[X_1, X_2]$, where $\chara(K)\not =2$, the ideals $(X_1+X_2)+(X_1,X_2)^2$, $(X_1-X_2)+(X_1,X_2)^2$ are distinct and have same circuits sets.
\noindent
It is useful to point out that if $\{f_1,\ldots,f_r\}$ is a  Gr\"obner basis with respect to a monomial order $\prec$, then $\supp(f_i)$ is not necessarily an element of $\cs(I)$, take for instance $A=K[X_1,X_2]$ and $I=(X_1+X_2,X_2)$. 
In fact, if $\{f_1,\ldots,f_r\}$ is a reduced Gr\"obner basis then $\supp(f_i)\in \cs(I)$ for all $i=1,\ldots,r$: If $\supp(f_h)\not\in\cs(I)$ for some $h$, then there would exist  $g\in I$ with $\supp(g)\subsetneq \supp(f_h)$; it is easily seen that this would contradict the fact that $\{f_1,\ldots,f_r\}$ is reduced, whether $\ini_{\prec}(g)=\ini_{\prec}(f_h)$ or not.  We have thus proven the following lemma.

\begin{lemma}\label{useful}
Let $I$ be a homogeneous ideal, $\prec$ a fixed monomial order and $G$  the reduced Gr\"obner basis of $I$ with respect to $\prec$. Then, $\supp(G)\subseteq\cs(I)$. 
\end{lemma}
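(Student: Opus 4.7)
The plan is to argue by contradiction, following the hint given in the paragraph immediately preceding the statement. Write $G = \{f_1, \ldots, f_r\}$ and suppose that for some index $h$ the support $\supp(f_h)$ fails to belong to $\cs(I)$. By the definition of the circuits set, this means there exists a nonzero $g \in I$ whose support is strictly contained in $\supp(f_h)$. The goal is to use such a $g$ to contradict the defining property of a reduced Gr\"obner basis, namely that for $i \neq j$ the monomial $\ini_\prec(f_i)$ does not divide any element of $\supp(f_j)$; combined with the observation that since $\ini_\prec(f_h)$ is $\prec$-maximal in $\supp(f_h)$, no other monomial of $\supp(f_h)$ can be divisible by $\ini_\prec(f_h)$ itself, one concludes that no monomial of $\supp(f_h) \setminus \{\ini_\prec(f_h)\}$ is divisible by any $\ini_\prec(f_i)$.

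I would then split into two cases according to whether $\ini_\prec(f_h)$ lies in $\supp(g)$. If $\ini_\prec(g) = \ini_\prec(f_h)$, I rescale $g$ so that its leading coefficient matches the (monic) leading coefficient of $f_h$ and form $h_0 := f_h - g \in I$. By construction $\supp(h_0) \subseteq \supp(f_h) \setminus \{\ini_\prec(f_h)\}$, and $h_0 \neq 0$ since otherwise $g$ would be a scalar multiple of $f_h$ and hence have the same support. Therefore $\ini_\prec(h_0)$ is an element of $\supp(f_h) \setminus \{\ini_\prec(f_h)\}$ that belongs to $\ini_\prec(I)$, so it must be divisible by some $\ini_\prec(f_i)$, contradicting the preliminary observation. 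In the remaining case $\ini_\prec(g) \neq \ini_\prec(f_h)$, the monomial $\ini_\prec(g)$ itself already lies in $\supp(f_h) \setminus \{\ini_\prec(f_h)\}$ and sits in $\ini_\prec(I)$, yielding the same contradiction with no further manipulation.

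The main—indeed essentially the only—subtlety is bookkeeping: one must carefully separate the ``$i \neq h$'' part of the reducedness hypothesis from the ``$i = h$'' part, the latter being supplied not by the definition directly but by the compatibility of $\prec$ with multiplication. Once this observation is isolated, both cases collapse into routine consequences of the reduction step of the Buchberger algorithm, and no further input (such as properties of the weight $\omega$ or the flat family of Section~\ref{flatt}) is needed.
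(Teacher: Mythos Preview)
Your proof is correct and follows exactly the approach sketched in the paper: argue by contradiction, pick $g\in I$ with $\supp(g)\subsetneq\supp(f_h)$, and split into the two cases $\ini_\prec(g)=\ini_\prec(f_h)$ and $\ini_\prec(g)\neq\ini_\prec(f_h)$, in each case exhibiting a monomial of $\supp(f_h)\setminus\{\ini_\prec(f_h)\}$ that lies in $\ini_\prec(I)$. The paper leaves the verification of the contradiction as ``easily seen''; you have simply written out those details, including the observation that the $i=h$ case of the divisibility claim follows from compatibility of $\prec$ with multiplication.
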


Let now ${\bf y}=(y_{ij})_{i,j=1,\ldots,n}$ be a matrix of indeterminates and $K({\bf y})$ an extension field of $K$. 
In the following we shall denote by  $\gamma$ the $K$-algebra homomorphism 
\begin{equation}\label{gamma}\gamma: \; K({\bf y})[X_1,\ldots,X_n] \longrightarrow K({\bf y})[X_1,\ldots,X_n], \;\;\;\;\;\;\;\; \gamma X_i\mapsto \sum_{j=1}^n y_{ij} X_j \hbox{\;\;\;\;for all\;\;} i=1,\dots,n.
\end{equation}

\begin{definition}\label{gcs}
Let $I$ be a homogeneous ideal of $A$. We define the {\em generic circuits set} of $I$ as  $\cs(\gamma I)$, and we  denote it by
$\gcs(I)$.  
Given a non-negative integer $d$, we let the {\em generic circuits set of $I$ truncated  at} $d$, denoted by $\gcs(I_{\leq d})$, be the circuits set $\cs(\gamma I_{\leq d})$. 
\end{definition}

\noindent 
It is easy to see that $\gcs(I_{\leq d})=\cs((\gamma I)_{\leq d})=\cs(\gamma I)_{\leq d}=\gcs(I)_{\leq d}$.

\begin{remark}\label{invariant}
The generic circuits set of a homogeneous ideal $I$ is invariant under coordinates changes, i.e., for all $h\in\GL_n(K)$ one has
$\gcs(hI)=\gcs(I).$ To this end, observe that if ${\bf z}$ is the matrix associated with $\gamma h$ then $K({\bf y})$ and $K({\bf z})$ are the same field; in particular, the entries of ${\bf z}$ are algebraically independent over $K$. Moreover, $\gamma hI$ is the image of $I$ under the map $K({\bf z})[X_1,\ldots,X_n]\longrightarrow K({\bf z})[X_1,\ldots,X_n]$, with $X_i\mapsto \sum_{j=1}^n z_{ij} X_j$ for all $i=1,\dots,n.$ Hence, $\supp(\gamma f)=\supp(\gamma h f)$ for all $f\in A$.
\end{remark} 

\begin{notation}\label{rango}{\em
Let $S$ be a finite set of monomials of $A$ and $W$ a $K$-vector subspace of $A.$ We set $$\rk_S W:=\dim_K( W+\langle S \rangle)/W, \hbox{\;\;\;} \rk^{S}W:=\dim_K( W+\langle S \rangle)/\langle S \rangle.$$  
Evidently, $\rk^{S}W= \rk_S(W) + \dim_K W - \dim_K \langle S \rangle.$}
\end{notation}

\begin{remark}\label{Srango}
  It is an easy observation completing the discussion before Lemma \ref{useful} that the circuits set  of a homogeneous vector space can be determined using ranks: Given a $K$-vector space  $W\subseteq A_d$,  a set $S$ is an element of  $\cs(W)$ if and only if $\rk_S W < |S|$ and  $\rk_{S'}(W)=|S'|$ for all $\emptyset \neq S' \subsetneq S$. 
\end{remark}

Let $W$ be a $K$-vector subspace of $A_d$ with basis $B.$ Consider an ordered monomial basis of $A_d,$ and let $M_W$  be the $\dim_K W\times \dim_K A_d$ matrix  whose   $(i,j)^{\rm th}$ entry  is  the coefficient of the $j^{\rm th}$-monomial in the $i^{\rm th}$ basis element of $W.$ Clearly, a minor of  $M_W$ is an element  of $K$.\\
\noindent
Now we consider $\gamma W$ together with its basis $\gamma B$, where $\gamma$ is as in \eqref{gamma}. The minors of the 
 matrix $M_{\gamma W}$  are polynomials in $K[{\bf y}]$ which specialize to the  minors of $M_W$ when all of the $y_{ij}$ are evaluated at $1$ if $i=j$ and at $0$ otherwise.
 
 When $S$ is a set of monomials in $A_d$ we may thus conclude that  $\rk_SW\leq \rk_S \gamma W$; if $K$ is infinite,  then there exists a non-empty Zariski open set $U \subseteq \GL_n(K) \subset K^{n^2}$ such that if $g\in U$ then 
\begin{equation}\label{Mrango}
\rk_S gW=\rk_S \gamma W=\max\{\rk_S hW \: h\in\GL_n(K)\}, \hbox{\;\;\;for all\;\;\;} S\subseteq A_d.
\end{equation}
\begin{theorem}\label{generalcs}
Let $K$ be an infinite field,  $d$ a positive integer and $I\subseteq A$ a homogeneous ideal. Then, there exists a non-empty Zariski open set $U\subseteq \GL_n(K)\subset K^{n ^2}$ such that $\gcs(I_{\leq d})=\cs(gI_{\leq d})$ for all $g\in U$.
\end{theorem}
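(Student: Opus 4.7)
The plan is to reduce the statement to the rank-based characterization of circuits sets given in Remark \ref{Srango}, and then invoke \eqref{Mrango} degree by degree and take a finite intersection.

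First I would exploit the degree decomposition. Since $I$ is homogeneous, every element of $I_{\leq d}$ is a sum of homogeneous components each of which again lies in $I_{\leq d}$, so a minimal support must consist of monomials of a single degree. Hence $\cs(I_{\leq d})=\sqcup_{h\leq d}\cs(I_h)$, as already noted after Definition \ref{cs}. The same remark applies to $\gamma I_{\leq d}$ and to $gI_{\leq d}$ for any $g\in\GL_n(K)$, so it is enough to prove that there is a non-empty Zariski open $U\subseteq\GL_n(K)$ such that $\cs(gI_h)=\cs(\gamma I_h)$ for every $h\leq d$ and every $g\in U$.

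Next, I would fix $h\leq d$ and apply the observation leading to \eqref{Mrango} to the finite-dimensional $K$-vector space $W=I_h\subseteq A_h$: since $K$ is infinite, there exists a non-empty Zariski open set $U_h\subseteq\GL_n(K)$ on which
\[
\rk_S (gI_h)=\rk_S (\gamma I_h)\qquad\text{for every }S\subseteq A_h.
\]
Here one uses that $A_h$ has only finitely many monomials, so there are only finitely many subsets $S$ to consider, and the minors of $M_{\gamma I_h}$ that witness maximality of the rank are polynomials in the $y_{ij}$ which do not vanish identically. Taking the finite intersection $U:=\bigcap_{h\leq d}U_h$, which is still a non-empty Zariski open subset of $\GL_n(K)$, ensures the equality of ranks simultaneously in every degree $h\leq d$ and for every set $S$ of monomials of degree $\leq d$.

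Finally, I would invoke Remark \ref{Srango}: a subset $S\subseteq A_h$ is a circuit of a homogeneous $K$-vector space $W\subseteq A_h$ if and only if $\rk_S W<|S|$ and $\rk_{S'}W=|S'|$ for every $\emptyset\neq S'\subsetneq S$. Since both conditions depend only on the ranks $\rk_{S'}$ for $S'\subseteq S\subseteq A_h$, and on $U$ these ranks agree for $gI_h$ and $\gamma I_h$, we conclude that $\cs(gI_h)=\cs(\gamma I_h)$ for all $g\in U$ and $h\leq d$. Assembling the degrees, $\cs(gI_{\leq d})=\cs(\gamma I_{\leq d})=\gcs(I_{\leq d})$ on $U$, as desired. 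The only subtle point, rather than a genuine obstacle, is the clean reduction to finitely many rank conditions; once the degree decomposition is in place and \eqref{Mrango} is available, the proof is a straightforward finite intersection argument.
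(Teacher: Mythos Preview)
Your proof is correct and follows essentially the same route as the paper: reduce to individual degrees via the decomposition $\cs(I_{\leq d})=\sqcup_{h\leq d}\cs(I_h)$, invoke the rank characterization of circuits (Remark~\ref{Srango}) together with \eqref{Mrango} to obtain a non-empty open set $U_h$ for each $h\leq d$, and then intersect. The paper's proof is just a terser version of exactly this argument.
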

\begin{proof}
For any integer $i$, a set $S$ belongs to $\cs(\gamma I_i)$ if and only if the condition on ranks of Remark \ref{Srango}  holds for the $K$-vector space $\gamma I_i$ or for $gI_i$, where $g$ belongs to a non-empty Zariski open set, say $U_i$, for which \eqref{Mrango} holds. The desired open set $U$ can be taken simply as the intersection of all $U_i$, $i=0,\ldots,d$.
\end{proof}

\noindent
We do not know at the present time whether there exists a non-empty Zariski open set $U$ such that $\gcs(I)=\cs(g I)$ for all $g\in U$, or whether there exists any such $g$ at all.

Let $I$ be a homogeneous ideal of $A$ and consider now the following equivalence relation on $\RR^n$: two weights $\omega$ and $\omega'$ are said to be equivalent  if and only if  $\ini_{\omega}(I)=\ini_{\omega'}(I)$. The closures with respect to the Euclidean topology of such equivalence classes are convex polyhedral cones and the collection of all such cones  form a fan, which is called the {\em Gr\" obner fan of $I$}, see \cite{MoRo}, \cite{St}. 

\begin{notation}\label{indiunrobo}{\em
    Let $S$ be a finite set of monomials of $A$ and $\omega$ a weight. We denote by $\ini_\omega(S)$ the set of all elements of $S$ with maximal weight. Similarly, for a collection $T$ of finite sets of monomials we denote by $\ini_{\omega}(T)$ the set of all $\ini_\omega(S)$ for $S$ in $T.$ When $I$ is a subset of $A$  we  will refer to $\ini_\omega (\cs(I))$ as the {\em initial circuits set of $I$ with respect to $\omega$}. 
It is not hard to see that  when $I$ is a homogeneous ideal $\ini_\omega (\cs(I))= \cs(\ini_\omega(I)),$ and also that  $\ini_\omega( \cs(I_{\leq d}))= \cs((\ini_\omega(I))_{\leq d}).$}    
    
\end{notation}

The following technical result will be useful in the remaining part of the section. See also Proposition 2.3 in \cite{St} and Proposition 2.6 in \cite{FuJeTh}.

\begin{lemma}\label{pretim}
  Let $I$ be a homogeneous ideal of $A$,  $\omega, \omega'\in\RR^n$ be two weights and $\prec$ a given monomial order. If $\{f_1,\ldots,f_r\}$ is a reduced Gr\"obner basis of $I$ with respect to $\prec_\omega$, then 
$$\ini_\omega(I)=\ini_{\omega'}(I) \hbox{\;\;\; if and only if \;\;\;} \ini_\omega(\supp(f_i))=\ini_{\omega'}(\supp(f_i)) \hbox{\;\;\; for  \;\;\;} i=1,\ldots,r.$$
\end{lemma}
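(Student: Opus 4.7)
The plan is to handle the two directions separately. The backward direction will be almost immediate from a fact recalled in the preliminaries; the forward direction will require combining uniqueness of reduced Gr\"obner bases with an index-matching step, which I expect to be the main obstacle.

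For the implication $(\Leftarrow)$, suppose $\ini_\omega(\supp(f_i)) = \ini_{\omega'}(\supp(f_i))$ for every $i$. Since $\ini_\omega(f_i)$ and $\ini_{\omega'}(f_i)$ are, by definition, the sums of the terms of $f_i$ whose monomials lie in these respective sets (with coefficients inherited from $f_i$), the hypothesis forces $\ini_\omega(f_i) = \ini_{\omega'}(f_i)$ as polynomials. The preliminaries record that if $\ini_\omega(\cdot)$ and $\ini_{\omega'}(\cdot)$ agree on all elements of a reduced Gr\"obner basis of $I$ with respect to $\prec_\omega$, then $\ini_\omega(I) = \ini_{\omega'}(I)$ (both sides sharing the Hilbert function of $I$); applying this to $\{f_1, \ldots, f_r\}$ gives the conclusion.

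For the implication $(\Rightarrow)$, assume $\ini_\omega(I) = \ini_{\omega'}(I)$. Taking $\ini_\prec$ of both sides and using the identity $\ini_{\prec_\omega}(I) = \ini_\prec(\ini_\omega(I))$ (and its analogue for $\omega'$) yields $\ini_{\prec_\omega}(I) = \ini_{\prec_{\omega'}}(I)$. Since reduced Gr\"obner bases depend only on the initial ideal, $\{f_1, \ldots, f_r\}$ is simultaneously the reduced Gr\"obner basis of $I$ with respect to $\prec_\omega$ and with respect to $\prec_{\omega'}$. Applying $\ini_\omega$ and $\ini_{\omega'}$ respectively (again by a fact from the preliminaries), both $\{\ini_\omega(f_i)\}$ and $\{\ini_{\omega'}(f_j)\}$ are reduced Gr\"obner bases of the same ideal $\ini_\omega(I) = \ini_{\omega'}(I)$ with respect to $\prec$; by uniqueness, they coincide as sets.

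The main obstacle is to promote this set-level equality to a componentwise one. Fix $i$ and choose $j$ with $\ini_\omega(f_i) = \ini_{\omega'}(f_j)$. Taking $\prec$-leading monomials, $\ini_{\prec_\omega}(f_i) = \ini_\prec(\ini_\omega(f_i)) = \ini_\prec(\ini_{\omega'}(f_j)) = \ini_{\prec_{\omega'}}(f_j)$, which belongs to $\supp(f_j)$. But the reducedness of $\{f_1, \ldots, f_r\}$ with respect to $\prec_\omega$ forbids $\ini_{\prec_\omega}(f_i)$ from dividing any monomial in $\supp(f_j)$ whenever $j \neq i$; since it trivially divides itself, this forces $j = i$. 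Hence $\ini_\omega(f_i) = \ini_{\omega'}(f_i)$, and taking supports yields $\ini_\omega(\supp(f_i)) = \ini_{\omega'}(\supp(f_i))$, as required.
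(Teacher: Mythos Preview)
Your proof is correct. The backward direction matches the paper's argument. For the forward direction, both you and the paper begin by deducing $\ini_{\prec_\omega}(I)=\ini_{\prec_{\omega'}}(I)$, but then diverge. The paper argues directly: from reducedness it gets $\ini_{\prec_\omega}(f_i)=\ini_{\prec_{\omega'}}(f_i)$, and then observes that if $\ini_\omega(f_i)\neq\ini_{\omega'}(f_i)$, the difference $\ini_\omega(f_i)-\ini_{\omega'}(f_i)$ is a nonzero element of $\ini_\omega(I)$ whose support misses $\ini_{\prec_\omega}(f_i)$, so its $\prec$-leading monomial lies in $\ini_{\prec_\omega}(I)$ yet is a non-leading monomial of $f_i$, contradicting reducedness. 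You instead invoke two structural facts from the preliminaries---that the reduced Gr\"obner basis depends only on the initial ideal, and that taking $\ini_\omega$ of a reduced basis yields a reduced basis of $\ini_\omega(I)$---to obtain the set equality $\{\ini_\omega(f_i)\}=\{\ini_{\omega'}(f_j)\}$, and then match indices via reducedness. Your route is a bit more conceptual, leaning on uniqueness of reduced bases as a black box; the paper's subtraction argument is more hands-on and avoids passing through the reduced basis of $\ini_\omega(I)$.
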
 
\begin{proof}
We first assume that $\ini_\omega(I)=\ini_{\omega'}(I).$ Since $\ini_\prec(\ini_\omega(I))=\ini_{\prec_\omega}(I)$,  we immediately have that  $\ini_{\prec_\omega}(I)=\ini_{\prec_{\omega '}}(I)$, hence $\ini_{\prec_{\omega '}}(f_i)\in\ini_{\prec_\omega}(I)$. Since $\ini_{\prec_{\omega '}}(f_i)$ is a monomial of $f_i$ and $\{f_1,\ldots,f_r\}$ a reduced Gr\"obner basis with respect to $\prec_\omega$, this implies that  $\ini_{\prec_\omega}(f_i)=\ini_{\prec_{\omega '}}(f_i)$ for $i=1,\ldots,r$.  Now, we assume by contradiction that $\ini_\omega(\supp(f_i))\neq \ini_{\omega '}(\supp(f_i))$ for some $i$, and accordingly $\ini_\omega(f_i)-\ini_{\omega '}(f_i)$ is a non-zero element of $\ini_\omega(I)$ which does not contain $\ini_{\prec_\omega}(f_i)$ in its support. Thus,  $\ini_\prec(\ini_\omega(f_i)-\ini_{\omega '}(f_i))\in \ini_{\prec_\omega}(I)$ contradicts the fact that $\{f_1\ldots,f_r\}$ is a reduced Gr\"obner basis of $I$ with respect to $\prec_\omega$.\\
Vice versa, when $\ini_\omega(\supp(f_i))=\ini_{\omega '}(\supp(f_i))$ then $\ini_{\omega}(I)=(\ini_\omega(f_i): i=1,\ldots,r)=(\ini_{\omega '}(f_i): i=1,\ldots,r)\subseteq \ini_{\omega '}(I)$, and equality is forced by the Hilbert function.
\end{proof}

From the previous result it follows as a corollary that the set of all supports of all reduced Gr\"obner bases of a homogeneous ideal $I$ determines the equivalence relation on weights that defines the Gr\"obner fan of $I$. 

\begin{corollary}\label{cervellobollito}
Let $I$ and $J$ be homogeneous ideals of $A$ with canonical universal Gr\"obner bases $G_1$ and $G_2$ respectively. If $\supp(G_1)=\supp(G_2)$ then $\Gf(I)=\Gf(J)$. 
\end{corollary}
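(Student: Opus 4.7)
The plan is to reduce, via Lemma~\ref{pretim}, the equality $\Gf(I)=\Gf(J)$ to the combinatorial statement that, for any monomial order $\prec'$, the set of supports of the reduced Gr\"obner basis of $I$ with respect to $\prec'$ is determined by $\supp(G_1)$ alone. Denote this set of supports by $\mathcal{T}_I(\prec')$, and analogously $\mathcal{T}_J(\prec')$ for $J$. Lemma~\ref{pretim}, applied with an auxiliary monomial order $\prec$, gives that for all $\omega,\omega'\in\RR^n$ the equality $\ini_\omega(I) = \ini_{\omega'}(I)$ holds if and only if $\ini_\omega(S) = \ini_{\omega'}(S)$ for every $S \in \mathcal{T}_I(\prec_\omega)$, and similarly for $J$. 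Hence it is enough to prove $\mathcal{T}_I(\prec') = \mathcal{T}_J(\prec')$ for every monomial order $\prec'$.

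First I would show that $\ini_{\prec'}(I)$ is determined by $\supp(G_1)$ and $\prec'$. For a finite set of monomials $S$, let $m_{\prec'}(S)$ denote its $\prec'$-largest element. Then $\ini_{\prec'}(I) = (m_{\prec'}(S) \colon S \in \supp(G_1))$: the inclusion ``$\supseteq$'' is clear since each $S \in \supp(G_1)$ equals $\supp(f)$ for some $f \in G_1 \subseteq I$, so $m_{\prec'}(S) = \ini_{\prec'}(f) \in \ini_{\prec'}(I)$; the inclusion ``$\subseteq$'' follows from Lemma~\ref{useful}, since the reduced Gr\"obner basis of $I$ with respect to $\prec'$ is contained in $G_1$ and its leading monomials, which generate $\ini_{\prec'}(I)$, are exactly $m_{\prec'}(S)$ for $S$ in the corresponding supports. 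In particular $\ini_{\prec'}(I) = \ini_{\prec'}(J)$ whenever $\supp(G_1) = \supp(G_2)$. I would next characterize $\mathcal{T}_I(\prec')$ as the unique subset $\mathcal{T} \subseteq \supp(G_1)$ such that (a) the set $\{m_{\prec'}(S) \colon S \in \mathcal{T}\}$ is in bijection with the minimal monomial generating set of $\ini_{\prec'}(I)$, and (b) for distinct $S, S' \in \mathcal{T}$, the monomial $m_{\prec'}(S')$ does not divide any element of $S \setminus \{m_{\prec'}(S)\}$.

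Conditions (a) and (b) translate verbatim the defining properties of a reduced Gr\"obner basis, so $\mathcal{T}_I(\prec')$ itself is a solution. For uniqueness, observe that each $S \in \supp(G_1) \subseteq \cs(I)$ is, by minimality of $S$ in $\cs(I)$, the support of a polynomial $f_S \in I$ uniquely determined up to a nonzero scalar; consequently any $\mathcal{T}$ satisfying (a) and (b) produces, upon normalizing $\{f_S \colon S \in \mathcal{T}\}$ to be monic, a reduced Gr\"obner basis of $I$ with respect to $\prec'$, and uniqueness of the reduced Gr\"obner basis forces $\mathcal{T} = \mathcal{T}_I(\prec')$. Since (a), (b), and the description of $\ini_{\prec'}(I)$ depend only on $\supp(G_1)$ and $\prec'$, the hypothesis $\supp(G_1) = \supp(G_2)$ immediately yields $\mathcal{T}_I(\prec') = \mathcal{T}_J(\prec')$ for every $\prec'$, and together with the reduction of the first paragraph this gives $\Gf(I) = \Gf(J)$. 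The hard part is the uniqueness step in this combinatorial characterization: the conditions (a), (b) on $\supp(G_1)$ alone do not obviously single out a unique $\mathcal{T}$, and we pull the uniqueness out of the standard fact that the reduced Gr\"obner basis of a homogeneous ideal is unique.
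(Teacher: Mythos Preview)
Your argument is correct and follows the same route the paper intends: reduce via Lemma~\ref{pretim} to showing that the supports of each reduced Gr\"obner basis of $I$ coincide with those of $J$. The paper states this corollary with only a one-line justification (``the set of all supports of all reduced Gr\"obner bases \dots\ determines the equivalence relation''), so your combinatorial characterization of $\mathcal{T}_I(\prec')$ in terms of $\supp(G_1)$ and $\prec'$ alone is exactly the step the paper leaves to the reader. One small misattribution: in your first paragraph, the inclusion ``$\subseteq$'' for $\ini_{\prec'}(I) = (m_{\prec'}(S) : S \in \supp(G_1))$ uses that the reduced Gr\"obner basis of $I$ with respect to $\prec'$ is contained in $G_1$, which is the \emph{definition} of the canonical universal Gr\"obner basis, not Lemma~\ref{useful}; you correctly invoke Lemma~\ref{useful} later for $\supp(G_1)\subseteq\cs(I)$.
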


\begin{proposition}
Let $I$ and $J$ be homogeneous ideals with same Hilbert function, $\prec$ a fixed monomial order, $G_1$ and  $G_2$  the reduced Gr\"obner bases (with respect to $\prec$)  of $I$ and $J$ respectively.  If $d$  is an integer 
greater than or equal to the largest degree of an element of $G_1$ and $\cs(I_{\leq d})=\cs(J_{\leq d})$ then 
$\supp(G_1)=\supp(G_2)$.
\end{proposition}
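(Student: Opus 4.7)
The plan is to deduce first that $I$ and $J$ have the same initial ideal with respect to $\prec$, and then to exploit the uniqueness of the reduced Gr\"obner basis to match the elements of $G_1$ with their counterparts in $G_2$. The role of the degree bound $d$ is precisely to make the first step go through.

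First, I would show $\ini_\prec(I) = \ini_\prec(J)$. By Lemma \ref{useful}, every $f \in G_1$ satisfies $\supp(f) \in \cs(I)$, and since $\deg f \leq d$ the hypothesis gives $\supp(f) \in \cs(I_{\leq d}) = \cs(J_{\leq d})$. Hence some element of $J$ has the same support as $f$, and in particular the same leading monomial; thus every minimal monomial generator of $\ini_\prec(I)$ lies in $\ini_\prec(J)$, yielding $\ini_\prec(I) \subseteq \ini_\prec(J)$. Equality then follows because both are monomial ideals with the same Hilbert function as $I$ (equivalently, as $J$).

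Writing $G_1 = \{f_1,\ldots,f_r\}$ with $\ini_\prec(f_i)=m_i$, I would next observe that the leading monomials of a reduced Gr\"obner basis are pairwise non-dividing, so the $m_i$'s form the minimal monomial generating set of the shared initial ideal and $G_2 = \{g_1,\ldots,g_r\}$ with $\ini_\prec(g_i)=m_i$ (in particular $\deg g_i \leq d$). For each $i$ the hypothesis produces a monic element $h_i \in J$ with $\supp(h_i) = \supp(f_i)$; the proof will be complete once I show $h_i = g_i$, since this gives $\supp(f_i) = \supp(g_i)$ and hence $\supp(G_1) = \supp(G_2)$ via the bijection $f_i \mapsto g_i$.

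Finally, I would invoke a standard uniqueness argument. The difference $h_i - g_i$ belongs to $J$; if it were non-zero its leading monomial would be strictly below $m_i$, would lie in $\ini_\prec(J)$, and would therefore be divisible by some $m_j$. But $\supp(h_i) \setminus \{m_i\} = \supp(f_i) \setminus \{m_i\}$ avoids all multiples of the $m_j$'s by reducedness of $G_1$, and $\supp(g_i) \setminus \{m_i\}$ does so by reducedness of $G_2$, a contradiction. I expect the only delicate point of the whole argument to be the first step: without the bound $d$ forcing every minimal generator of $\ini_\prec(I)$ to be detected inside $I_{\leq d}$, the inclusion $\ini_\prec(I) \subseteq \ini_\prec(J)$ could not be established, and the rest of the mechanism would collapse.
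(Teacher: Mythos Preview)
Your proof is correct and follows essentially the same approach as the paper: use Lemma~\ref{useful} together with the hypothesis to find monic $h_i\in J$ with $\supp(h_i)=\supp(f_i)$, and then show these $h_i$ constitute the reduced Gr\"obner basis $G_2$. The paper is slightly more streamlined in that it argues directly that $H_2=\{h_1,\ldots,h_r\}$ is a reduced Gr\"obner basis of $J$ (the Hilbert-function count shows it is Gr\"obner, and reducedness is inherited from $G_1$ via the matching supports) and then invokes uniqueness, whereas you first establish $\ini_\prec(I)=\ini_\prec(J)$ as a separate step and then spell out the uniqueness argument by hand via the difference $h_i-g_i$; but the underlying mechanism is identical.
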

\begin{proof} Let $G_1=\{f_1,\ldots,f_r\}.$
We know that  $G_1\subseteq I_{\leq d}$, and by Lemma \ref{useful} $\supp(G_1)\subseteq \cs(I_{\leq d})=\cs(J_{\leq d})$. Thus, there exists a subset $H_2=\{h_1,\ldots,h_r\}$ of $J$ with $\supp(h_i)=\supp(f_i)$ 
for $i=1,\ldots,r$, and we may assume that the initial monomials  of $h_1,\ldots,h_r$ with respect to $\prec$ have coefficients equal to $1.$
Now,   $(\ini_{\prec}(h_i) \: i=1,\ldots, r)$ has the same Hilbert function as $I$ and, thus, as $J$; consequently, $H_2$ is a Gr\"obner basis of  $J$  and it is clearly reduced. Since such a basis is unique, $H_2=G_2$ and we are done.
\end{proof}

\begin{corollary}\label{preTim}
  Let $I$ and $J$ be homogeneous ideals  with the same Hilbert function,  and let $d$ be the largest degree of a minimal generator of the lex-segment ideal with same Hilbert function as $I$ and $J$.
If $\cs(I_{\leq d})=\cs(J_{\leq d})$ then $\Gf(I)=\Gf(J)$.
\end{corollary}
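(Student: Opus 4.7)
The plan is to reduce this to Corollary \ref{cervellobollito} via the preceding proposition, applied uniformly over all monomial orders. Concretely, I would show that for every monomial order $\prec$, the reduced Gr\"obner bases of $I$ and $J$ with respect to $\prec$ have the same support, and then take the union over all $\prec$ to obtain equality of supports of the canonical universal Gr\"obner bases.

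First, I would observe the following uniform degree bound: for \emph{any} monomial order $\prec$, every element of the reduced Gr\"obner basis of $I$ (respectively $J$) has degree at most $d$. Indeed, the elements of the reduced Gr\"obner basis of $I$ with respect to $\prec$ correspond bijectively (and degree-preservingly) to the minimal monomial generators of $\ini_\prec(I)$, which is a monomial ideal with the same Hilbert function as $I$, hence as the lex-segment ideal $L$ associated with this Hilbert function. By the theorem of Bigatti--Hulett--Pardue, $L$ has the largest graded Betti numbers among all monomial (equivalently, homogeneous) ideals with that Hilbert function; in particular $\beta_{0,j}(\ini_\prec(I)) \leq \beta_{0,j}(L)$ for every $j$, so no minimal generator of $\ini_\prec(I)$ can have degree larger than the largest generator degree of $L$, which is exactly $d$.

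Once the degree bound is in place, I would apply the proposition just proved (the one preceding this corollary) with this choice of $d$: since $\cs(I_{\leq d}) = \cs(J_{\leq d})$ by hypothesis and the reduced Gr\"obner basis of $I$ with respect to $\prec$ is contained in $I_{\leq d}$, the proposition yields $\supp(G_1^{\prec}) = \supp(G_2^{\prec})$, where $G_1^{\prec}$ and $G_2^{\prec}$ denote the reduced Gr\"obner bases of $I$ and $J$ with respect to $\prec$. Since this holds for every monomial order $\prec$, taking the union over all $\prec$ shows that the canonical universal Gr\"obner bases $G_1$ and $G_2$ of $I$ and $J$ satisfy $\supp(G_1) = \supp(G_2)$. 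An application of Corollary \ref{cervellobollito} then gives $\Gf(I) = \Gf(J)$, as required.

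The main obstacle is the first step, namely the uniform degree bound. It is not entirely self-contained in the paper and depends on invoking the Bigatti--Hulett--Pardue theorem (valid in arbitrary characteristic thanks to Pardue); without such an external input one has no way to guarantee that a single $d$ simultaneously bounds the generator degrees of \emph{all} initial ideals $\ini_\prec(I)$ as $\prec$ varies. Everything else is a direct concatenation of the proposition with Corollary \ref{cervellobollito}.
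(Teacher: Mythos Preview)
Your proposal is correct and is precisely the argument the paper intends: the corollary is stated without proof, and the natural route is exactly the one you outline---bound the degrees of all reduced Gr\"obner bases by $d$ (via Bigatti--Hulett--Pardue or, equivalently, Gotzmann's regularity theorem applied to the common Hilbert function), apply the preceding proposition for each $\prec$, take the union, and invoke Corollary~\ref{cervellobollito}. Your identification of the degree bound as the only non-self-contained step is accurate.
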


As a special case of the above corollary, we now recover one of the main results of  \cite{R\"oSc}, namely Corollary 3.2, where the existence of generic Gr\"obner fans is proven.

\begin{corollary}\label{tim}
Let $I$ be a homogeneous ideal of $A$. Then, there exists a non-empty Zariski open set $U\subseteq\GL_n(K)$ such that 
$\Gf(gI)=\Gf(hI)$ for all $g, h\in U$.
\end{corollary}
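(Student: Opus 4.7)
The plan is to deduce this directly by combining Theorem \ref{generalcs} with Corollary \ref{preTim}, using the fact that a change of coordinates preserves the Hilbert function. Since Theorem \ref{generalcs} requires $K$ infinite, this assumption is implicit (otherwise no non-empty Zariski open set of $\GL_n(K)$ is guaranteed to contain $K$-points).

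First I would choose the degree $d$ to be the largest degree of a minimal generator of the lex-segment ideal having the same Hilbert function as $I$. Applying Theorem \ref{generalcs} to $I$ at this $d$, I obtain a non-empty Zariski open set $U\subseteq\GL_n(K)$ such that
\[
\cs((gI)_{\leq d})=\gcs(I_{\leq d}) \quad \text{for all } g\in U.
\]
In particular, for any two $g,h\in U$ one has $\cs((gI)_{\leq d})=\cs((hI)_{\leq d})$.

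Next I would verify that the hypotheses of Corollary \ref{preTim} hold for the pair $(gI, hI)$. Since any $g\in\GL_n(K)$ preserves the Hilbert function, $gI$ and $hI$ have the same Hilbert function as $I$, hence the same lex-segment ideal; consequently, the integer $d$ chosen above is precisely the invariant appearing in the statement of Corollary \ref{preTim} for the pair $(gI,hI)$. Applying that corollary therefore yields $\Gf(gI)=\Gf(hI)$, which is the desired conclusion.

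I do not expect a genuine obstacle here: the entire argument is a bookkeeping step that packages the previously established results. The only point to double-check is that the degree $d$ in Corollary \ref{preTim} is indeed a function of the Hilbert function alone (so that it can be fixed once and for all using $I$ and then used for every pair $gI, hI$), and this is evident from its definition in terms of the lex-segment ideal.
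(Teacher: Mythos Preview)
Your proof is correct and follows exactly the approach of the paper, which states that the result is a direct consequence of Corollary \ref{preTim} together with Theorem \ref{generalcs}. The only difference is that you spell out explicitly why the degree $d$ and the Hilbert function are uniform over all $gI$, which is implicit in the paper.
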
 
\begin{proof}
It is a direct consequence of  the previous corollary and Theorem \ref{generalcs}.
\end{proof}

\section{General initial ideals with respect to weights}

As we have explained in the introduction, the generic initial ideal of $I$ with respect to a given monomial order $\prec$ is an important invariant of $I$ and one would like to have a similar invariant when using  weights.  There are two naive definitions of what a generic initial ideal with respect to a given weight could be. One might set 
$\gin_{\omega}(I)$ to be $\ini_\omega(\gamma I)\subseteq K({\bf y})[X_1,\ldots,X_n]$, where $\gamma$ is as in \eqref{gamma} and Definition \ref{gcs}. The disadvantage here is that to define $\gin_{\omega}(I)$ in this way  would not provide a coordinate-independent invariant, as the following easy example shows. If we take $I=(X_1)\subseteq K[X_1,X_2]$,  a coordinates change such that  $X_1 \mapsto X_1+X_2$ and $\omega=(1,1)$ then we have
$\ini_{\omega}(X)=(y_{11}X_1+y_{12}X_2)\neq ((y_{11}+y_{21})X_1+(y_{12}+y_{22})X_2)=\ini_{\omega}(\gamma(X_1+X_2))$.
Moreover, with this definition, the resulting generic initial ideal could not be viewed as an ideal of $K[X_1,\ldots,X_n]$. Alternatively, when $K$ is infinite one might be tempted to let $\gin_{\omega}(I)$ be $\ini_{\omega}(gI)$ for a general $g\in \GL_n(K)$, but this is in some sense meaningless because it relies on the existence of a non-empty Zariski open set $U$ where $\ini_\omega (hI)$ is constant for all $h\in U$. In the above example, clearly such an open set does not exist.
Therefore, we would like to emphasize that the expression {\em a general initial ideal with respect to} $\omega$ should be used in the same way as {\em a general linear form} or {\em a general hyperplane section} is used: always in combination with a specific and well-defined property $\mathcal{P}$ of $\ini_{\omega}(gI)$ which is constant on a Zariski open set of $\GL_n(K).$ Keeping this in mind, it is correct to  phrase Theorem \ref{stab} in the following way: {\em a general initial ideal with respect to $\omega$ is fixed under the action of the group $B_\omega.$}
\noindent
We have seen in the previous section how the generic circuits set of an ideal $I$ can be used as an invariant of $I$. Also, rather than defining the generic initial ideal of $I$ with respect to a weight, one can consider the generic initial circuits set $\gcs(\ini_\omega(I))=\ini_\omega(\gcs(I))$  and its truncations $\gcs(\ini_\omega(I)_{\leq d})$ at some $d\in\NN$, see Notation \ref{indiunrobo}.  Remark \ref{invariant} yields that any such $\gcs(\ini_\omega(I)_{\leq d})$ is an invariant of $I$.
Moreover, when $K$ is infinite a truncated generic circuits set of $I$ can be defined using general changes of coordinates: One can let $\gcs(\ini_{\omega}(I_{\leq d}))$  be  $\ini_{\omega}(\cs(gI_{\leq d}))$  for a general change of coordinates $g$ and this makes sense because of Theorem \ref{generalcs}. 

\subsection{The subgroup $\mathcal{B}_{\omega}$ of the Borel group} In the usual setting,  generic initial ideals in any characteristic have the property of being fixed under the action of  the Borel subgroup $\mathcal{B}$ of $\GL_n(K)$ consisting of invertible $n\times n$ upper-triangular matrices.
 Let $\omega=(\omega_1,\ldots,\omega_n) \in \ZZ^n$  be a fixed weight. By  relabeling the variables if necessary, 
we will further assume that $\omega_1\geq\cdots\geq \omega_n$. 
 We define a subset  of $\mathcal{B}$ by taking all $n\times n$ matrices $M=(m_{ij})$ in $\mathcal{B}$ such that $m_{ii}=1$ for $i=1,\ldots, n$ and $m_{ij}=0$ if $\omega_i=\omega_j$, and denote it by $\mathcal{B}_\omega$. Obviously, the identity matrix belongs to $\mathcal{B}_\omega$. If $M, N\in \mathcal{B}_\omega$ and $\omega_i=\omega_j$ for $i\not=j$, then the $(i,j)^{\rm th}$ entry of $MN$ is zero;  it is also easy to verify that every $M\in\mathcal{B_\omega}$ has an inverse in $\mathcal{B_{\omega}}$ by computing the row-echelon form of $M$ augmented with the identity. We have thus verified that  $\mathcal{B}_\omega$ is a subgroup of $\mathcal{B}$. The main result we want to prove next, and we do in Theorem \ref{stab},  is that  general initial ideals with respect to a non-negative weight $\omega$ are fixed under the action of $\mathcal{B_\omega}.$

 We now let $d$ be a positive integer and, as before, let $A_d$ be the degree $d$ part of the polynomial ring $A$. The largest weight of a monomial in $A_d$ is $\omega_1d.$ For all $0\leq a\leq \omega_1d+1$ we let $S_a$  be the set of all monomials of $A_d$ of weight strictly less then $a$. 
\noindent 
Given a vector subspace $W$ of $A_d$, we let $\alpha_\omega(W)$ denote the vector
$$\alpha_{\omega}(W):=(\rk^{S_{\omega_1d}}W, \rk^{S_{\omega_1d-1}}W,\ldots,  \rk^{S_{1}}W).$$
Clearly,  when $V, W$ are $K$-vector subspaces of $A_d$ and $\cs(V)=\cs(W)$ then $\alpha_{\omega}(V)=\alpha_{\omega}(W),$ by Remark \ref{Srango}.  Next, we write $\alpha_{\omega}(V) \geq \alpha_{\omega}(W)$ when the inequality holds pointwise; when in addition $\alpha_{\omega}(V) \not = \alpha_\omega(W)$ we write $\alpha_\omega(V) > \alpha_\omega(W)$.

Recall that a $K$-vector subspace $W$ of $A_d$ is homogeneous with respect to $\omega$ if it is spanned by polynomials which are homogeneous with respect to $\omega$. 

\begin{proposition}\label{borelranks} Let $W$ be a $K$-vector subspace of $A_d$  and $\omega=(\omega_1,\ldots,\omega_n)$ be a weight with $\omega_1\geq \cdots \geq \omega_n\geq 0.$ 
 Then, for every integer $0\leq a \leq \omega_1d$,  the dimension of the homogeneous component of $\ini_\omega(W)$ of weight $a$ is  $\rk^{S_a}W-\rk^{S_{a+1}}W$. Furthermore, if $W$ is homogeneous with respect to $\omega$ and $b\in\mathcal{B}$ is an upper-triangular change of coordinates then  $\alpha_\omega(bW)\geq\alpha_\omega(W).$
\end{proposition}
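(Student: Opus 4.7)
For the first assertion, my plan is to exploit the weight filtration
$L_a := \langle\{m\in A_d : \text{weight}(m)\leq a\}\rangle$,
which satisfies $L_{a-1}=\langle S_a\rangle$. The second isomorphism theorem gives
$$\rk^{S_a}W \;=\; \dim W - \dim(W\cap L_{a-1}),$$
so the difference $\rk^{S_a}W - \rk^{S_{a+1}}W$ equals $\dim(W\cap L_a) - \dim(W\cap L_{a-1})$. I would then identify this difference with $\dim\ini_\omega(W)_a$ by constructing the $K$-linear map $W\cap L_a \to \ini_\omega(W)_a$ sending $f$ to its weight-$a$ homogeneous component $\sigma_a(f)$. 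This is well-defined since $\sigma_a(f)$ is either zero, in which case $f\in W\cap L_{a-1}$, or nonzero, in which case $a$ is the maximal weight of $f$ and $\sigma_a(f)=\ini_\omega(f)\in\ini_\omega(W)_a$. Its kernel is clearly $W\cap L_{a-1}$, and surjectivity is immediate because $\ini_\omega(W)_a$ is spanned by those $\ini_\omega(g)$ with $g\in W$ of maximal weight exactly $a$.

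For the second assertion, the crucial observation is that an upper-triangular $b\in \mathcal{B}$ weakly raises weights: since $bX_i\in\langle X_1,\ldots,X_i\rangle$ and each $X_j$ with $j\leq i$ has weight $\omega_j\geq\omega_i$, the image $bm$ of any monomial $m$ of weight $w$ is a sum of monomials of weight $\geq w$. Setting $H^a:=\langle\{m\in A_d : \text{weight}(m)\geq a\}\rangle$, this translates to $b(H^a)\subseteq H^a$. The $\omega$-homogeneity of $W$ now gives the direct sum decomposition $W=(W\cap L_{a-1})\oplus(W\cap H^a)$, whence
$$\rk^{S_a}W = \dim W - \dim(W\cap L_{a-1}) = \dim(W\cap H^a).$$
Since $b$ is injective and stabilizes $H^a$, one has $b(W\cap H^a)\subseteq bW\cap H^a$ and thus $\dim(bW\cap H^a)\geq \dim(W\cap H^a)$. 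Finally, the decomposition $A_d=L_{a-1}\oplus H^a$ yields an isomorphism $H^a\cong A_d/L_{a-1}$, under which $bW\cap H^a$ injects into $(bW+L_{a-1})/L_{a-1}$, giving
$$\rk^{S_a}(bW) \;\geq\; \dim(bW\cap H^a) \;\geq\; \dim(W\cap H^a) \;=\; \rk^{S_a}(W).$$
Doing this for every $a$ yields $\alpha_\omega(bW)\geq \alpha_\omega(W)$.

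The step that most requires attention is the splitting $W=(W\cap L_{a-1})\oplus(W\cap H^a)$, which is where the $\omega$-homogeneity of $W$ enters essentially: without it, $\rk^{S_a}W$ need not equal $\dim(W\cap H^a)$, and the desired inequality can genuinely fail, since cancellation among monomials of distinct weights may cause $b$ to lower the maximal weight of a polynomial. The first assertion, by contrast, is valid for any $W$ because the filtration-to-associated-graded identification does not require $W$ itself to be $\omega$-homogeneous.
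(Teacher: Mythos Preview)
Your proof is correct and follows essentially the same approach as the paper's: both parts rest on the weight filtration of $A_d$ and the observation that an upper-triangular $b$ weakly raises weights. The paper phrases the first assertion via the row-echelon form of the matrix $M_W$ (with columns ordered by $\prec_\omega$) and dispatches the second in one sentence from the definition of $\alpha_\omega$, whereas you spell out the filtration argument and the splitting $W=(W\cap L_{a-1})\oplus(W\cap H^a)$ explicitly; these are equivalent presentations of the same idea.
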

\begin{proof}  Let $\prec$ be a monomial order. We consider, as we did to prove \eqref{Mrango}, the matrix $M_W$ associated with $W$ after having ordered a monomial basis of $A_d$ by means of $\prec_{\omega}.$ The first part of the statement can be verified by  computing the row-echelon form of $M_W$. The desired inequality follows from the definition of $\alpha_\omega$, since the image under $b$ of a monomial  is the sum of that monomial and a linear combination of other monomials of equal or greater weight.
\end{proof}

\begin{theorem}  \label{stab}
Let $I$ be a homogeneous ideal of $A=K[X_1,\ldots,X_n]$, with $|K|=\infty$. Let also $\omega=(\omega_1,\ldots,\omega_n)$ be a weight with $\omega_1\geq \cdots \geq \omega_n.$  Then, a general initial ideal of $I$ with respect to $\omega$ is  $\mathcal {B}_\omega$-fixed, i.e., 
there exists a non-empty Zariski open set $U$ such that $b\left(\ini_\omega(g(I)\right)=\ini_\omega(g(I))$, for all $g\in U$ and all $b\in\mathcal{B}_\omega$.
\end{theorem}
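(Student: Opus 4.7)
The plan is to combine Proposition~\ref{borelranks} with the upper-semicontinuity of the rank invariants $\alpha_\omega$ and a flat-family rigidity argument. First, I would fix a degree bound $D$ (for instance a Castelnuovo--Mumford regularity bound) such that $\ini_\omega(gI)$ is generated in degrees $\leq D$ for $g$ in a non-empty open set. For each $d \leq D$, the functions $g \mapsto \rk^{S_a}(gI_d)$ are upper-semicontinuous on $\GL_n(K)$ (by the argument leading to \eqref{Mrango}), so $\alpha_\omega(gI_d)$ attains its componentwise-maximum value $\alpha_d^\ast$ on a non-empty Zariski open $U_d \subseteq \GL_n(K)$. Since $\mathcal{B}_\omega$ is generated by the one-parameter subgroups $\{I + tE_{ji}: t \in K\}$ indexed by pairs $(j,i)$ with $j < i$ and $\omega_j > \omega_i$, I would shrink to a single non-empty Zariski open $U \subseteq \bigcap_{d\leq D}U_d$ on which also $\alpha_\omega(bgI_d) = \alpha_d^\ast$ for each such elementary $b$ and each $d \leq D$.

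Fix $g \in U$, $b \in \mathcal{B}_\omega$, and set $W := \ini_\omega(gI)_d$ for some $d \leq D$. Since $W$ is $\omega$-homogeneous, Proposition~\ref{borelranks} gives $\alpha_\omega(bW) \geq \alpha_\omega(W) = \alpha_d^\ast$. The identity $\alpha_\omega(V) = \alpha_\omega(\ini_\omega V)$, which follows from Proposition~\ref{borelranks} by a telescoping computation over weight components, together with $\alpha_\omega(\ini_\omega(bgI)_d) = \alpha_\omega(bgI_d) = \alpha_d^\ast$, forces the equality $\alpha_\omega(bW) = \alpha_d^\ast$.

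The principal obstacle is to upgrade this rank equality to the subspace-level equality $bW = \ini_\omega(bgI)_d$. I would argue this via the flat families $b\widetilde{gI}$ and $\widetilde{bgI}$ in $A[t]$ from Subsection~\ref{flatt}: both are flat over $K[t]$ with coincident fiber $bgI$ at $t = 1$, and by a rigidity argument exploiting the maximality of $\alpha_d^\ast$ together with the semicontinuity of $\alpha_\omega$, their $t = 0$ fibers $b\ini_\omega(gI)$ and $\ini_\omega(bgI)$ must coincide for $g$ generic. The two families differ as ideals in $A[t]$ in general (their generic fibers are obtained from $gI$ by different elements of $\GL_n(K[t])$), so this is the step where the genericity of $g$ enters essentially.

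Granted the identification, $bW$ coincides with the $\omega$-homogeneous ideal $\ini_\omega(bgI)_d$. For any $\omega$-homogeneous $v \in W$ of weight $w$, we have $bv = v + (\text{strictly-higher-weight terms})$ by the definition of $\mathcal{B}_\omega$; since $bv \in bW$ and $bW$ is $\omega$-homogeneous, every weight-component of $bv$ must lie in $bW$, so in particular $v \in bW$. Hence $W \subseteq bW$, and $bW = W$ by comparing dimensions.
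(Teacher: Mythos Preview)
Your proposal has a genuine gap: the identification $b\,\ini_\omega(gI) = \ini_\omega(bgI)$ that you label the ``principal obstacle'' is false in general, even for generic $g$. Take $A = K[X_1,X_2,X_3]$, $\omega = (1,1,0)$, and $I = (X_1)$. For general $g$ one has $gI = (c_1X_1 + c_2X_2 + c_3X_3)$ with $c_1,c_2,c_3$ generic, so $W := \ini_\omega(gI)_1 = \langle c_1X_1 + c_2X_2\rangle$. An element $b \in \mathcal{B}_\omega$ fixes $X_1, X_2$ and sends $X_3 \mapsto \alpha X_1 + \beta X_2 + X_3$; thus $bW = W$, as the theorem predicts. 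However $\ini_\omega(bgI)_1 = \langle (c_1+c_3\alpha)X_1 + (c_2+c_3\beta)X_2\rangle$, which differs from $W$ whenever $c_1\beta \neq c_2\alpha$. Your two flat families $b\widetilde{gI}$ and $\widetilde{bgI}$ do agree at $t=1$, but nothing forces their special fibers to coincide, and here they do not. This also undercuts your derivation of $\alpha_\omega(bW) = \alpha_d^\ast$ one paragraph earlier, since that step tacitly identifies $bW$ with $\ini_\omega(bgI)_d$ in order to bound $\alpha_\omega(bW)$ by $\alpha_\omega(bgI_d)$.

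The paper avoids this entirely. It uses only the single family $b\widetilde{gI}$ and compares its special fiber $bW$ to a \emph{general} fiber $(bD_a gI)_d$ for suitable $0\neq a \in K$: semicontinuity of the relevant minors gives $\alpha_\omega(bW) \leq \alpha_\omega((bD_a gI)_d)$, and the right-hand side is at most $\alpha_d^\ast$ because $bD_a g \in \GL_n(K)$. Combined with $\alpha_\omega(bW) \geq \alpha_\omega(W) = \alpha_d^\ast$ from Proposition~\ref{borelranks}, one obtains $\alpha_\omega(bW) = \alpha_\omega(W)$. A direct induction on the weight decomposition $W = \bigoplus_i W_i$ then finishes: writing $bf = f + q$ for $f \in W_j$ with $q$ of strictly higher weight, any failure of $q$ to land in $\bigoplus_{i>j} W_i$ would force a strict increase in some $\rk^{S_a}$, contradicting $\alpha_\omega(bW) = \alpha_\omega(W)$. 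Your final paragraph is essentially this last step rephrased (once $bW$ is known to be $\omega$-homogeneous, $W \subseteq bW$ is immediate); the point is that the $\omega$-homogeneity of $bW$ has to be extracted from this rank bound, not from the false identity $bW = \ini_\omega(bgI)_d$.
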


\begin{proof}
 Observe that, if  $f$ is a homogeneous polynomial, $\omega$ a weight and we let $\omega':=\omega + (1,1,\dots,1)$, then $\ini_\omega(f)=\ini_{\omega'}(f)$; therefore we may assume that $\omega\in\RR_{\geq 0}^n$. Now notice that there exists an upper bound $D$ for the generating degrees of all the ideals  with the same Hilbert function as $I$. Thus, if $\omega$ and $\omega'$ induce the same partial order on all monomials of degree less than or equal to $D$, then $\ini_{\omega}(J)=\ini_{\omega'}(J)$ for every homogeneous ideal $J$ with such  Hilbert function. Hence, we may further assume that $\omega\in\ZZ^n$, with $\omega_1\geq\ldots\geq\omega_n\geq 0$.

\noindent 
By Theorem \ref{generalcs}, we let $U$ be a non-empty Zariski open set such that $\gcs(I_{\leq D})=  \cs(gI_{\leq D})$ for all $g\in U.$
Clearly, it is enough to prove the equality degree by degree up to the degree $D.$
Let $W$ be the degree $d$ component of $\ini_\omega(gI)$ with $0\leq d\leq D.$
First, we decompose $A_d$ as a direct sum of vector spaces $V_p\oplus V_{p-1}\oplus\cdots\oplus V_0$, where $V_i$ is generated by all polynomials in $A_d$ which are homogeneous with respect to $\omega$ and of weight $i.$ 
Accordingly, $p=d\omega_1$, $V_p=K[X_1,\ldots,X_j]_d$ with $j=\max\{i \: \omega_i=\omega_1\},$ and $b$ acts as the identity on $V_p$. Now, if we decompose $W$ in an analogous manner as $\oplus_{i=p}^0W_i$, we immediately have that $W_p\subseteq V_p$ and $W_p$ is fixed under the action of $b$. 
We may thus assume by induction that $b\left(\oplus_{i< j} W_{p-i}\right)=\oplus_{i< j} W_{p-i}$  and we need to prove that $b\left(\oplus_{i\leq j} W_{p-i}\right)$ is equal to $\oplus_{i\leq j} W_{p-i}$. In order to see this it is enough to prove a containment.  Assume $W_j\not = 0$, otherwise the result follows from the inductive hypothesis. When $0\neq f\in W_j$  we can write $b(f)$ as $f+q$, where $q$ is a sum of polynomials of weight larger than $j$, i.e., $q\in V_p\oplus\cdots \oplus V_{j+1},$ and it is left  to show  that $q\in  W_p\oplus\cdots \oplus W_{j+1}$.  If this would not be the case, then $\ini_{\omega}(f)=\ini_{\omega}(q)\not \in W_p\oplus\cdots \oplus W_{j+1}.$ Thus, by Proposition 
\ref{borelranks} we would have $\rk^{S_{j+1}}(b(W))> \rk^{S_{j+1}}(W)$ and in particular   
$\alpha_{\omega}(bW)>\alpha_{\omega}(W)$.

  On the other hand, by definition of $W$ and Subsection \ref{flatt} we have that 
$$\alpha_{\omega}(bW)=\alpha_{\omega}(b(\ini_\omega(gI)_d))=\alpha_{\omega}\left(b(\widetilde{(gI)}_{t=0})_d\right),$$ where $\widetilde{gI}$ denotes the homogenization of the ideal $gI$ with respect to $\omega$. 
By arguing as before \eqref{Mrango} we see that this quantity is determined by the ranks of the non-zero minors of the matrix $M_{b(\widetilde{(gI)}_{t=0})_d}$, each such minor corresponding to a non-zero minor of the matrix $M_{b(\widetilde{(gI)})_d}$, which has entries in $K[t]$.   In particular, we can find an $a\in K$ such that all the non-zero minors of $M_{b(\widetilde{(gI)})_d}$ do not vanish after applying the  substitution $t=a$.
Hence, $\alpha_{\omega}(bW) \leq  \alpha_{\omega}\left(b((\widetilde{(gI)}_{t=a})_d\right)=\alpha_{\omega}\left(b(D_{a}(gI))_d\right)$.  Accordingly,  
$$\alpha_{\omega}(bW)\leq \max\{\alpha_{\omega}(hI_d) 
\: h\in\GL_n(K)\}=\alpha_{\omega}(gI_d),$$ where the last equality follows from the choice of $g\in U$ and \eqref{Mrango}.  
Proposition \ref{borelranks} implies that $\alpha_{\omega}(gI_d)= \alpha_{\omega}(\ini_{\omega}((gI_d)),$ which is by definition $\alpha_{\omega}(W)$. We have thus obtained that $\alpha_{\omega}(bW)\leq\alpha_{\omega}(W)$ and the desired contradiction. 
\end{proof}

\noindent
If we consider a homogeneous ideal  $I$ of $A$ and  take for instance $\omega$ to be the weight $(1,1,\dots,1,0)$, then by the previous theorem a general initial ideal of $I$ with respect to $\omega$  is fixed under any coordinates change which is the identity on $X_1,\dots,X_{n-1}$; this fact can be useful in applications, see for instance \cite{CaKu} Section 4 and \cite{CaSb} Proposition 1.6.



\begin{thebibliography}{99}
\bibitem [AhKwSo]{AhKwSo} J. Ahn, S. Kwak, Y. Song. {\em Generic Initial ideals of Singular Curves in Graded Lexicographic Order}. J.Algebra 372 (2012) 584-594.


\bibitem [BaChPo]{BaChPo} D. Bayer, H. Charalambous, S. Popescu. {\em Extremal Betti numbers and applications to monomial ideals}. J. Algebra 221 (1999), no. 2,  497-512.

\bibitem [BaSt]{BaSt} D. Bayer, M. Stillman. {\em A criterion for detecting $m$-regularity}. Invent. Math. 87 (1987), no. 1, 1-11. 

\bibitem [CaKu]{CaKu} G. Caviglia, M. Kummini. {\em Poset embeddings of Hilbert functions}.  Math. Z., doi: 10.1007/s00209-012-1097-6.

\bibitem [CaSb] {CaSb} G. Caviglia, E. Sbarra. {\em The Lex-Plus-Power inequality for local cohomology modules}. Available at {\em http://arxiv.org/abs/1208.2187 [math.AC]}.
 
\bibitem [CoSi] {CoSi} A. Conca, J. Sidman. {\em Generic initial ideals of points and curves}. J. Symbolic Comput. 40 (2005), no. 3, 1023-1038.

\bibitem [Ei] {Ei} D. Eisenbud. {\em Commutative algebra with a view towards algebraic geometry}. Springer Verlag, New York, 1995.

 \bibitem [FuJeTh]{FuJeTh} K. Fukuda,  A. Jensen,  R. Thomas. {\em Computing Gr\"obner fans}. Math. Comp. 76 (2007), 2189-2212.


\textsl{\bibitem [Ga] {Ga} A. Galligo. {\em A propos du th\'eor\`em de pr\'eparation de Weierstrass}, {\em in} Fonctions des Plusieurs Variables Complexes, Lect. Notes in Math. 409 (1974), 543-579. }

\bibitem [Gr] {Gr} M. Green. {\em Generic initial ideals}, {\em in} Six lectures on Commutative Algebra (J. Elias et. al. eds),  119-186 (2010), Birkh\"auser Verlag AG, Basel, 2010.

\bibitem [Gt] {Gt} H. Grauert. {\em \"Uber die Deformation isolierter Singularit\"aten analystischer Mengen}. Invent. Math. 15 (1972), 171-198.


\bibitem [Ha] {Ha} R. Hartshorne. {\em Connectedness of the Hilbert scheme}. Publ. Math. de l'I.H.E.S. 29 (1966), 5-48.

\bibitem [HeHi] {HeHi} J. Herzog, T. Hibi. {\em Monomial ideals}. GTM 260, Springer Verlag, 2010.

\bibitem[MoRo] {MoRo}  T. Mora, L. Robbiano. {\em The Gr\"obner Fan of an Ideal}. J. Symbolic Computation 6 (1988),
   183-208.


\bibitem [Ox] {Ox} J. Oxley. {\em Matroid Theory}. Oxford University Press, Oxford, 2011.

\bibitem [Pa] {Pa} K. Pardue. {\em Deformation classes of graded modules and maximal Betti
numbers}. Illinois Journal of Mathematics 40, no. 4 (Winter 1996), 564-585.


\bibitem [Ro] {Ro}  L. Robbiano. {\em Term Orderings on the Polynomial Ring}. Proc. EUROCAL 85, Lect. Notes in
   Computer Science, vol. 204, Springer, Berlin, 1985,  513-517. 

\bibitem [R\"oSc]{R\"oSc} T. R\"omer, K. Schmitz.  {\em Generic tropical varieties}. J. Pure Appl. Algebra 216, no. 1,  (2012) 140-148.

\bibitem [St] {St} B. Sturmfels. {\em Gr\"obner Bases and Convex Polytopes}. University Lecture Series, Vol. 8. AMS,
   Providence, Rhode Island, 1995.

\end{thebibliography}
\end{document}